\definecolor{orange}{rgb}{1,0.3,0}
\definecolor{vert}{rgb}{0,0.5,0}
\definecolor{violet}{rgb}{0.7,0.1,0.8}
\numberwithin{equation}{section}
\newcommand{\R}{\mathds{R}}
\newcommand{\e}{{\rm e}}
\renewcommand{\d}{\,{\rm d}}
\newtheorem{theorem}{Theorem}[section]
\newtheorem{lemma}[theorem]{Lemma}
\renewcommand{\phi}{\varphi}
\renewcommand{\rho}{\varrho}
\renewcommand{\leq}{\leqslant}
\renewcommand{\geq}{\geqslant}
\newcommand{\gr}{\mathfrak{r}}
\newcommand{\dm} {\tfrac 12}
\renewcommand{\rhd} {\varrho_{1/2}}
\newcommand{\pnu}{p^\nu}
\newcommand{\gS}{\mathfrak{S}}
\def\e{{\rm e}}
\def\L{{\EuScript L}}
\def\gq{{\mathfrak q}}
\def\gr{{\mathfrak r}}
\def\@settitle{\begin{center}%
  \baselineskip14\p@\relax
    \normalfont\LARGE
  \@title
  \end{center}%
}
\begin{document}
\let\MakeUppercase\relax 
	\title{ On the friable mean-value  of\goodbreak the Erd\H{o}s-Hooley Delta function}
	
	\author{\large B. Martin}
	\address{Laboratoire de Mathématiques Pures et Appliquées, CNRS, Université du Littoral Côte d’Opale,\goodbreak 50 rue F. Buisson, BP 599, Calais, 62228, France}
	\email{bruno.martin@univ-littoral.fr}

\author{\large G. Tenenbaum}
\address{Institut \'Elie Cartan, 
Universit\' e de Lorraine,
B.P. 70239, \goodbreak
F--54506 Vand\oe{}uvre-l\`es-Nancy Cedex,  
France}
\email{gerald.tenenbaum@univ-lorraine.fr}

	\author{\large J. Wetzer }
	\address{Laboratoire de Mathématiques Pures et Appliquées, CNRS, Université du Littoral Côte d’Opale, \goodbreak 
	50 rue F. Buisson, BP 599, Calais, 62228, France}
	\email{julie.wetzer@univ-littoral.fr}
	\date{\today}
\subjclass[2020]{Primary 11N25 ; Secondary 11N37}
\keywords{friable integers, Erd\H os-Hooley Delta function, mean-value of arithmetic functions, saddle-point method.}

	\maketitle

        \begin{abstract}
For integer $n$ and real $u$, define $\Delta(n,u):= |\{d : d \mid n,\,\e^u <d\leqslant  \e^{u+1} \}|$. Then, the Erd\H os-Hooley Delta function is defined as $
\Delta(n):=\max_{u\in\R} \Delta(n,u).$ 
We provide uniform upper and lower bounds for the mean-value of $\Delta(n)$ over friable integers, i.e. integers free of large prime factors.
        \end{abstract}

\section{Introduction and statement of results} 
For integer $n\geqslant  1$ and real $u$, put 
\begin{equation*}
\Delta(n,u):= |\{d : d \mid n,\,\e^u <d\leqslant  \e^{u+1} \}|, \qquad 
\Delta(n):=\max_{u\in\R} \Delta(n,u). 
\end{equation*}
The $\Delta$-function  was introduced by Erd\H{o}s in 1974 and was highlighted in 1979 by Hooley \cite{Hooley-79}. It turned out to be a key-concept in many branches of analytic number theory such as  Waring type problems, circle method, Diophantine approximation, distribution of prime factors in polynomial sequences, etc.\par 
However, the behaviour of $\Delta(n)$ remains rather mysterious. For instance,  the average order is still not known with desirable precision. Hall and Tenenbaum \cite{HaT82} obtained in $1982$ the lower bound
\begin{equation}
\label{lowbHT}
D(x) :=\sum_{n\leqslant x}\Delta(n)\gg x \log_2x \qquad(x\geqslant 3),
\end{equation}
whereas Tenenbaum \cite{T85} showed in $1985$ that  for suitable $c>0$ we have
\begin{equation}\label{eq:majo-T85}
D(x)	\ll x \e^{c\sqrt{\log_2x\log_3x}}\qquad(x\geq16).
\end{equation}
Here and in the sequel, we let $\log _k$ denote the $k$-fold iterated logarithm. Recently, La Bretèche and Tenenbaum \cite[th.\thinspace1.1]{BT22} obtained a slight improvement to \eqref{eq:majo-T85} by removing the triple logarithm in the exponent and, even more recently, Koukoulopoulos and Tao \cite{TK23} obtained the remarkable bound
$$D(x)\ll x(\log_2x)^{11/4}\qquad (x\geqslant 3).$$
A few months later, Ford, Koukouloulos and Tao \cite{FKT23}  improved \eqref{lowbHT} by showing
$$D(x)\gg  x(\log_2x)^{1+\eta +o(1)}\qquad (x\geqslant 3),$$
where the exponent $\eta\approx0.3533227$ appears in the work of Ford, Green and Koukoulopoulos \cite{FGK23}  on the normal order of $\Delta(n)$. Both bounds have been recently improved by La Bretèche and Tenenbaum \cite{BT24}: we have
\begin{equation}
\label{BTDelta}
x(\log_2x)^{3/2}\ll D(x)\ll x(\log_2x)^{5/2}\qquad (x\geqslant 3),
\end{equation}
which constitutes the current state of the art.
\medskip 

 Let $P^+(n)$ denote the largest prime factor of an integer $n>1$ and let us agree that  $P^+(1)=1$. Following  usual notation, we define $S(x,y)$ as the set of $y$-friable integers not exceeding $x$, and denote by $\Psi(x,y)$ its cardinality, viz.  
\begin{equation*}
  S(x,y):=\{n\leqslant x:P^+(n)\leqslant y \},\qquad \Psi(x,y)=|S(x,y)|\quad(x\geqslant 1,y\geqslant 1). 
\end{equation*}
Structural properties of the set $S(x,y)$ motivated a vast array of the literature in the last fourty years. The applications are indeed numerous and significant: circle method, Waring-type problems, cryptology, sieve theory, probabilistic models in number theory.
\par 
Given an arithmetical function $f$, let us use the notation $\Psi(x,y;f):=\sum_{n\in S(x,y)}f(n)$. In this work we investigate bounds for the friable mean-value  
\begin{equation}\label{defSxy}
  \gS(x,y):=\frac{\Psi(x,y;\Delta)}{\Psi(x,y)}\quad(x\geqslant y\geqslant 2). 
\end{equation}
\par \vskip-1.8mm
We now define some quantities arising in our statements. Given $\kappa>0$,  denote by 
$\rho_{\kappa}$ the continuous solution 
on $]0,\infty[$ of the delay differential system  
\begin{equation*}
  \begin{cases}
    \rho_{\kappa}(v)=v^{\kappa-1}/\Gamma(\kappa)                &\text{($0<v\leq1$)},\\
    v\rho_{\kappa}'(v)+(1-\kappa)\rho_{\kappa}(v)+\kappa\rho_{\kappa}(v-1)=0 &\text{($v>1$)},
  \end{cases}
\end{equation*}
and  set  $\rho_\kappa(v):=0$ for $v<0$.\par\goodbreak
  Thus (see, e.g., \cite{HT93}) $\varrho_\kappa$ is the order $\kappa$ fractional convolution power of $\varrho:=\varrho_1$, the Dickman function, which provides a continuous approximation to $\Psi(x,y)$ in 
\begin{equation}\label{defi:H_eps}
  	H_{\varepsilon}:=\Big\{(x,y):\ x\geq3,\ \e^{(\log_2x)^{5/3+\varepsilon}}\leqslant y\leqslant x\Big\} \qquad(\varepsilon>0).
\end{equation}
Indeed, improving on results by Dickman and de Bruijn,  Hildebrand \cite{H86} proved the asymptotic formula 
	\begin{equation}\label{esti:Psi-Hildebrand}
		\Psi(x,y)=x\rho(u)\bigg\{1+O\bigg(\frac{\log(2u)}{\log y}\bigg)\bigg\}
\quad((x,y)\in H_\varepsilon),   
	\end{equation}
with the standard notation $$u = \frac{\log x}{\log y}.$$
\par
The asymptotic behaviour of the functions $\varrho_\kappa$ (and in fact of more general delay differential equations, as displayed in \cite{HT93}) may be described in terms of the function $\xi(t)$ defined as the unique positive solution to $\e^\xi = 1+ t\xi$ for $t\neq1$ and by $\xi(1)=0$. From \cite[lemma III.5.11]{GT15} and the remark following \cite[th.\thinspace III.5.13]{GT15}, we quote the estimates 
\begin{equation}\label{eval-xi}
\xi(t)=\log t+\log_2t+O\Big({\log_2t\over \log t}\Big),\quad\xi'(t)= {1\over t}+{1\over t\log t}+O\Big({\log_2t\over t(\log t)^2}\Big)\qquad (t\to\infty).
\end{equation}
Applying \cite[cor.\thinspace2]{HT93} in the case $(a,b)=(1-\kappa,\kappa)$, we have
\begin{equation}\label{esti-int-rho_kappa}  
\rho_{\kappa}(v)= \sqrt{\frac{\xi'(v/\kappa)}{2\pi\kappa}}\exp\bigg\{\kappa\gamma-\kappa\int_{1}^{v/\kappa}\xi(t)\d 
  t\bigg\}\Big\{1+O\Big(\frac{1}{v}\Big)\Big\}\qquad (v\geqslant 1+\kappa),
\end{equation}
where $\gamma$ denotes Euler's constant. 
We put
\begin{equation}
  \label{eq:rgot}
  \gr(v):={\varrho_2(v)\over \sqrt{v}\varrho(v)}\asymp \frac1{\sqrt{v}}\exp\bigg(\int_1^v\big\{\xi(t)-\xi(t/2)\big\}\d t\bigg)\asymp 2^{v+O(v/\log 2v)}\qquad (v\geqslant 1),
\end{equation}
while a genuine asymptotic formula follows from \eqref{esti-int-rho_kappa}. 
\par
Let $\tau(n)$ denote the total number of divisors of an integer $n$. We trivially have 
\begin{equation}
\label{enctriv}
\tau(n)/\log 2n\ll \Delta(n)\leqslant \tau(n)\qquad (n\geqslant 1),
\end{equation}
 where the lower bounds follows from the pigeon-hole principle. Since, by 
 \cite[cor. 2.3]{TW03}, we have 
$$\Psi(x,y;\tau)= \bigg\{1+O\bigg(\frac{\log(2u)}{\log y}\bigg)\bigg\}x \rho_2(u) \log y \quad((x,y)\in H_\varepsilon),$$ 
we may  state as a benchmark that 
\begin{displaymath}
\frac{\gr(u)}{\sqrt{u}}  \ll \gS(x,y) \ll  2^{u+O(u/\log 2u)} \log y\qquad ((x,y)\in H_\varepsilon).   
\end{displaymath}
\par We obtain the following results, where the following notation is used: 
 \begin{align}
  &
\overline{u}:=\min\Big(\frac{y}{\log y}, u\Big)\qquad (x\geqslant y\geqslant 2),\label{def:u_ubar}\\
  &g(t):=\log \bigg({(1+2t)^{1+2t}\over (1+t)^{1+t}(4t)^{t}}\bigg) \qquad(t>0),&\label{def-g}\\ 
		&\varepsilon_y:=\frac1{\sqrt{\log y}}\quad(y\geqslant 2).&\label{def-epsy} 
\end{align}
 \begin{theorem}\label{3est}
{\rm (i)} Let $\varepsilon>0$. For a suitable absolute constant $c>0$ and uniformly for $(x,y)\in H_\varepsilon$, we have  
\begin{equation}
\label{encH-eps}\log_2 y  +  \gr(u)\ll \gS(x,y)  \ll 2^{u+O(u/\log 2u)} \e^{c\sqrt{\log_2 y \log_3 y}}.   
\end{equation}
 {\rm (ii)} For  $2\leqslant y\leqslant x^{1/(2\log_2x\log_3x)}$, and with $\lambda:=y/\log x$,  we have 
\begin{equation}\label{eval-gS-hors-Heps}
   \gS(x,y)\asymp  \e^{\{1+O(\varepsilon_y+1/\log 2u)\}g(\lambda)u}. 
\end{equation}
\end{theorem}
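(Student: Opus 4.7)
The plan is to treat (i) and (ii) by different methods. In (i), the pair $(x,y)$ lies in Hildebrand's range, so $\Psi(x,y)\sim x\varrho(u)$ by \eqref{esti:Psi-Hildebrand} and it suffices to estimate $\Psi(x,y;\Delta)$. In (ii), the pair lies well outside $H_\varepsilon$, so both $\Psi(x,y)$ and $\Psi(x,y;\Delta)$ must be analyzed by the saddle-point method.

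For Part (i), the trivial bound $\Delta(n)\leq \tau(n)$ combined with \cite[cor.\thinspace 2.3]{TW03} and \eqref{esti:Psi-Hildebrand} already yields $\gS(x,y)\ll \varrho_2(u)\log y/\varrho(u)\asymp \sqrt{u}\,\gr(u)\log y$, which delivers the target factor $2^{u+O(u/\log 2u)}\asymp \gr(u)$ up to a spurious $\sqrt{u}\log y$. To replace the latter by $\e^{c\sqrt{\log_2 y\log_3 y}}$, I would write $n=ab$ with $(a,b)=1$ and $P^+(a)\leq z<P^-(b)$ for a well-chosen truncation $z$, apply the sub-multiplicative inequality $\Delta(n)\leq \Delta(a)\tau(b)$, and invoke \eqref{eq:majo-T85} (or an average-version adapted from \cite{T85}) on the $\Delta(a)$-sum while estimating the $\tau(b)$-sum by friable convolution techniques. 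For the lower bound, $\Delta(n)\geq \tau(n)/\log 2n$ only gives $\gr(u)/\sqrt{u}$; the extra $\sqrt{u}$ comes from the fact that on a positive density of $n\in S(x,y)$ the divisors of $n$ cluster around $\sqrt n$ with Erd\H os--Kac-type fluctuation of order $\sqrt{\log 2n}$, so that $\Delta(n)\gg \tau(n)/\sqrt{\log 2n}$ on that set. The additive $\log_2 y$ term follows by transposing the Hall--Tenenbaum construction \cite{HaT82} to the friable setting, using that a positive proportion of $n\in S(x,y)$ have many distinct prime factors spread across nested dyadic blocks.

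For Part (ii), the denominator $\Psi(x,y)$ is handled by the Hildebrand--Tenenbaum saddle-point formula in the very friable range. For the numerator, write $n=\prod_{p\leq y}\pnu$ and observe that $\Delta(n,u)$ counts lattice points $(b_p)_{p\leq y}$ with $0\leq b_p\leq \nu_p$ and $\sum_p b_p\log p\in(u,u+1]$. The central task is to show that, for $n$ drawn with the saddle-point distribution of exponents, this count reaches in $u$ the heuristic maximum $\asymp \e^{g(\lambda)u}$. The function $g$ is expected to arise as the value of an entropy-type variational problem linking the local density of those lattice points to the single parameter $\lambda=y/\log x$, which encodes the relative size of typical primes in $n$ versus $\log n$. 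The lower bound is then produced by exhibiting friable integers with near-optimal exponent distribution; the upper bound follows from Rankin's inequality applied to $\sum_{n\in S(x,y)}\Delta(n)n^{-\alpha}$ at the saddle-point $\alpha=\alpha(x,y)$, combined with a prime-by-prime multiplicative majorant for $\Delta$.

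The main obstacle I anticipate lies in Part (ii): matching the two-sided exponent $g(\lambda)u\{1+O(\varepsilon_y+1/\log 2u)\}$. The lower bound requires an explicit near-extremal family in $S(x,y)$ with precisely controlled size, while the upper bound requires a sharp saddle-point evaluation of a $\Delta$-weighted generating series. Reconciling the combinatorial optimization that yields $g(\lambda)$ with the analytic saddle-point calibration, and tracking errors of size $\varepsilon_y$ and $1/\log 2u$ through both, is where the bulk of the technical work is expected to lie.
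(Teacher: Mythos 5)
Your overall division of labor is inverted relative to what actually works: part (ii) is where you anticipate the bulk of the difficulty, but in the range $2\leqslant y\leqslant x^{1/(2\log_2x\log_3x)}$ the admissible error factor $\exp\{O(\varepsilon_y+1/\log 2u)g(\lambda)u\}$ absorbs any power of $\log x$, because $g(\lambda)u\gg \overline u$ and $\overline u(\varepsilon_y+1/\log 2u)\gg\log_2x$ there. Consequently the elementary sandwich $\max(1,\lfloor \tau(n)/\log n\rfloor)\leqslant\Delta(n)\leqslant\tau(n)$ reduces (ii) entirely to the evaluation of $\Psi(x,y;\tau)/\Psi(x,y)$, which is available from \cite{Te22} in the form $\zeta(\alpha,y)\e^{-uh(\lambda)\{1+O(\varepsilon_y)\}}$ with $h(t)=t\log(1+1/t)-g(t)$; the only remaining work is an estimate of $\log\zeta(\alpha,y)=\int_\alpha^1\varphi_y(\sigma)\,{\rm d}\sigma+O(\log_2y)$, split at $y\asymp(\log x)(\log_2x)^3$. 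The function $g$ thus emerges algebraically, not as the value of an entropy variational problem over exponent vectors, and your proposed Rankin-plus-extremal-family programme would amount to reproving \cite{Te22} the hard way while still not yielding the matching lower exponent without the sandwich.

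Both halves of your part (i) have genuine gaps. For the lower bound, the claim that $\Delta(n)\gg\tau(n)/\sqrt{\log 2n}$ on a positive proportion of $n\in S(x,y)$ cannot produce $\gr(u)$: the mean $\Psi(x,y;\tau)/\Psi(x,y)\asymp\varrho_2(u)\log y/\varrho(u)$ is carried by a density-zero set of integers with abnormally many prime factors, so restricting to a positive proportion only recovers the normal order of $\tau$, which is exponentially smaller. The paper instead starts from $\Delta(n)\tau(n)\geqslant\sum_{d,d'\mid n,\,0<\log(d'/d)\leqslant1}1$, writes the pairs in coprime form, and evaluates the resulting sums via $\sum_{m\in S(x,y),(m,d)=1}2^{-\Omega(m)}\asymp Cx\varrho_{1/2}(u)/\sqrt{\log y}$; this leads to the integral $\varrho(u)^{-1}\int\varrho(u-v)\varrho_{1/2}(v/2)^2\,{\rm d}v$, whose range $v\geqslant2$ gives $\varrho_2(u)/(\sqrt u\,\varrho(u))=\gr(u)$ (using $\varrho_{1/2}(v/2)^2\asymp\varrho(v)/\sqrt v$ and $\varrho*\varrho=\varrho_2$) and whose range $v\leqslant2$ gives the $\log_2y$ term — so both terms come from one mechanism, not from a separate transposition of \cite{HaT82}. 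For the upper bound, invoking \eqref{eq:majo-T85} on the $z$-friable component $a$ is circular: that estimate concerns the unrestricted mean value, and the friable analogue is precisely what is to be proved. What is actually required is a full transposition of the $L_{k,q}$ double induction of \cite{T85} to series $\sum\mu(n)^2M_q(n_k)^{1/q}n^{-\beta}$ over $y$-friable $n$ at the saddle point $\beta=\alpha(\sqrt x,y)$; the factor $2^{u+O(u/\log 2u)}$ then arises from comparing $x^\beta\zeta(\alpha,y)$ with $\Psi(x,y)\log x$, and $\e^{c\sqrt{\log_2y\log_3y}}$ from an induction of length $K\asymp\log_2y+u$. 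Your sketch names the right source but supplies none of this machinery.
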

 Note that $g$  is positive and strictly increasing on $(0,+\infty)$. The asymptotic behaviour of this function is given by 
\begin{equation}\label{eq:asymp-g}
  g(\lambda)=
  \begin{cases}
\log 2-1/(4\lambda)+O(1/\lambda^2)  & \text{ as } \lambda\to\infty,  \\    
\lambda\log(1/\lambda)-\lambda(\log 4-1)+O(\lambda^2)  & \text{ as } \lambda\to0. 
  \end{cases}
\end{equation}
Morever, the lower bound $g(\lambda)u\gg \overline{u}$ holds on the whole range $x\geqslant y\geqslant 2$.
\par 
The error term in \eqref{eval-gS-hors-Heps} may be simplified to $1/\log 2u$ if $\log y>(\log_2x)^2$ and to $\varepsilon_y$ otherwise. 
\par 

Note that \eqref{enctriv} implies
$$\gS(x,y)\asymp\frac{\Psi(x,y;\tau)}{u^K\Psi(x,y)}\qquad \Big(\log y\leqslant \sqrt{\log x}\Big),$$
with $K=K(x,y)\in[0,2]$, so that, to the stated accuracy, the evaluation of $\gS(x,y)$ reduces in this range to that of $\Psi(x,y;\tau)/\Psi(x,y)$. This is consistent with the Gaussian tendency of the distribution of the divisors of friable integers: as the friability parameter $y$ decreases, the divisors of friable $n$ concentrate around the mean-value $\sqrt{n}$ and $\Delta(n)$ resembles more and more to $\tau(n)$, the total number of divisors. Another description of this phenomenon appears in \cite{DT18}. 
\par 
Considering available methods, Theorem \ref{3est} essentially agrees with standard expectations regarding methodology. We leave to a further project the task of adapting the method of \cite{TK23} or \cite{BT24} in the upper bound of \eqref{encH-eps}. We note right away that, in the present context, such an improvement would only be relevant for very large values of $y$ since the exponent $\sqrt{\log_2y\log_3y}$ is absorbed by the remainder $O(u/\log 2u)$ as soon as $y\leqslant x^{1/(\log_2x)^{c}}$ with $c>1/2$. 
\section{Preliminary estimates}
Here and throughout, the letter $p$ denotes a prime number.
In \cite{HT86}, Hildebrand and Tenenbaum  provided a universal estimate for $\Psi(x,y)$ by the saddle-point method. Define 
\begin{displaymath}
  \zeta(s,y):=\prod_{p\leqslant y}\Big(1-\frac{1}{p^s}\Big)^{-1},  \quad  \phi_y(s):=-\frac{\zeta'(s,y)}{\zeta(s,y)} \quad (\Re s>0 ,y\geqslant 2),
\end{displaymath}
and, for $2\leqslant y \leqslant x$, let $\alpha=\alpha(x,y)$ denote the unique positive  solution 
to the equation $\phi_y(\alpha)=\log x$. According to  \cite[th.\thinspace1]{HT86}, we have 
\begin{equation}\label{esti:Psi-Hildebrand-Tenenbaum}  \Psi(x,y)=\frac{x^\alpha\zeta(\alpha,y)}{\alpha\sqrt{2\pi|\phi_y'(\alpha)|}}\Big\{1+O\Big(\frac{1}{u}+\frac{\log y}{y}\Big)\Big\}\quad(x\geqslant y\geq2).
\end{equation}	
 By \cite[(2.4)]{HT86}), we have 
 \begin{equation}\label{alpha:esti-general}
  \alpha=\frac{\log(1+y/\log x)}{\log y}\Big\{1+O\Big(\frac{\log_2 y}{\log y}\Big)\Big\}\quad(x\geqslant y\geqslant 2).
\end{equation}
Moreover,  by \cite[(7.8)]{HT86}, we have, for any given $\varepsilon>0$,
\begin{equation}  \label{asymp-alpha-xi}
\alpha = 1 - \frac{\xi(u)}{\log y} +O\Big( \e^{- (\log y)^{(3/5)-\varepsilon}} + \frac{1}{u(\log y)^2}\Big) \quad(x\geqslant x_0(\varepsilon),\, (\log x)^{1+\varepsilon}\leqslant y\leqslant x). 
\end{equation}
Finally, by \cite[(2.5)]{HT86}, we have 
\begin{equation}
  \label{eq:eval-phi'_y}
  |\phi_y'(\alpha)| = \Big(1+ \frac{\log x}{y}\Big)\log x \, \log y \Big\{ 1+
O\Big( \frac{1}{\log(u+1)}+\frac{1}{\log y} \Big)\Big\} \quad(x\geq y\geq 2).  
\end{equation}

\medskip

\section{Proof of Theorem \ref{3est}{\rm(i)}: lower bound}
Let $\tau(n)$ denote the total number of divisors of a natural integer $n$. The following inequality is established in \cite[lemma 60.1]{HT88}
$$\Delta(n)\tau(n)\geqslant \sum_{\substack{d,d'|n \\ 0<\log (d'/d)\leqslant 1}}1=\sum_{\substack{dd'|n \\ (d,d')=1\\ 0<\log (d'/d)\leqslant 1}}\tau\Big({n\over dd'}\Big)\qquad (n\geqslant 1),$$
the equality above being obtained by representing the ratios $d'/d$ in reduced form.
\par Put
$$u_t:=\frac{\log t}{\log y}\quad (t\geqslant 1,y\geqslant 2),\quad\Omega(n):=\sum_{p^\nu\| n}\nu\quad(n\geqslant 1).$$
 Since $\tau(ab) \leqslant \tau(a) 2^{\Omega(b)} \,(a,b\geqslant 1)$, we have, for $(x,y)\in H_\varepsilon$,
\begin{equation}
  \label{eq:mingS1}
  \gS(x,y)\geqslant {1\over \Psi(x,y)}\sum_{\substack{dd'\in S(x,y) \\(d,d')=1\\ 0<\log (d'/d)\leqslant 1}} {1\over 2^{\Omega(dd')}}\Psi\Big({x\over dd'},y\Big)\gg \sum_{\substack{dd'\in S(x,y) \\ (d,d')=1 \\ 0<\log (d'/d)\leqslant 1}}{\varrho(u-u_{dd'})\over \varrho(u)dd'2^{\Omega(dd')}},
\end{equation}
where the last inequality follows from \eqref{esti:Psi-Hildebrand}.
To evaluate the double sum in \eqref{eq:mingS1}, we establish an asymptotic formula for 
$$T_d(x,y):=\sum_{\substack{m\in S(x,y)\\(m,d)=1}} \frac 1{2^{\Omega(m)}}\cdot$$
We shall make use of the following notation 
\begin{align*}
  &C:=\prod_{p}{\sqrt{1-1/p}\over 1-1/2p},\quad\kappa_y:={1\over (\log y)^{2/5}},\\
  &\varphi_y(d):=\prod_{p|d}\Big(1+{1\over 2p^{1-\kappa_y}}\Big),\quad \vartheta_y(d):=\sum_{p|d}{\log p\over p^{1-\kappa_y}}, \quad\gq(d):=\prod_{p|d}\Big(1-\frac1{2p}\Big)\quad(d\geqslant 1).\\
\end{align*}

\begin{lemma}\label{Td}
Let $\varepsilon>0$. For  $x\geq 1$, $y>\exp\{(\log_23x)^{5/3+\varepsilon}\}$,  $d\in S(x,y)$,  we have 
\begin{equation}\label{eq:Td}
T_d(x,y)={Cx\varrho_{1/2}(u)\over \sqrt{\log y}}\Big\{\gq(d)+O\Big(\kappa_y\varphi_y(d)\{1+\vartheta_y(d)\}\Big)\Big\}.
\end{equation}
\end{lemma}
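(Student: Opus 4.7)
The multiplicative function $m\mapsto 2^{-\Omega(m)}\mathbf{1}_{(m,d)=1}$ admits over $y$-friables the formal Dirichlet series factorisation
\[\sum_{\substack{P^+(m)\leq y\\(m,d)=1}}\frac{2^{-\Omega(m)}}{m^s}=K_d(s)H(s,y),\qquad K_d(s):=\prod_{\substack{p\mid d\\ p\leq y}}\Bigl(1-\frac{1}{2p^s}\Bigr),\quad H(s,y):=\prod_{p\leq y}\Bigl(1-\frac{1}{2p^s}\Bigr)^{-1}.\]
Since $H(s,y)$ shares the singular structure of $\zeta(s,y)^{1/2}$ near $s=1$, the natural framework is a friable Selberg--Delange-type mean-value with parameter $\kappa=1/2$---matching the average $f(p)=1/2$ at primes---which produces the density $\varrho_{1/2}$ and the singular constant $C=\prod_p\sqrt{1-1/p}/(1-1/(2p))$. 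Evaluating the finite factor $K_d(1)=\gq(d)$ then furnishes the announced main term.

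Concretely, I would first establish the unconditional case $d=1$,
\[T_1(x,y)=\frac{Cx\varrho_{1/2}(u)}{\sqrt{\log y}}\bigl\{1+O(\kappa_y)\bigr\},\]
by applying the Tenenbaum--Wu friable variant of the Selberg--Delange method (cf.\ \cite[cor.\thinspace2.3]{TW03}) to $f(m)=2^{-\Omega(m)}$, the exponent $2/5$ in $\kappa_y=(\log y)^{-2/5}$ being inherited from the Hildebrand-type remainder in the underlying friable density formula. For general $d$, the identity $\mathbf{1}_{(m,d)=1}=\sum_{e\mid(m,d)}\mu(e)$ together with the complete additivity of $\Omega$ delivers
\[T_d(x,y)=\sum_{e\mid \mathrm{rad}(d)}\Bigl(-\tfrac12\Bigr)^{\omega(e)}T_1(x/e,y).\]
Inserting the previous asymptotic and factoring out $\varrho_{1/2}(u)$, the leading sum
\[\sum_{e\mid\mathrm{rad}(d)}\frac{(-1/2)^{\omega(e)}}{e}=\prod_{\substack{p\mid d\\ p\leq y}}\Bigl(1-\frac{1}{2p}\Bigr)=\gq(d)\]
realises the main term, while the correction arises from the discrepancy between $\varrho_{1/2}(u-u_e)$ and $\varrho_{1/2}(u)$.

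The key obstacle is uniformity in $d$, since a naive bound term-by-term would cost $2^{\omega(d)}$. The saddle-point formula \eqref{esti-int-rho_kappa} provides $\varrho_{1/2}(u-u_e)/\varrho_{1/2}(u)=\exp(\beta\log e)\{1+O(\cdot)\}$ with $\beta=\xi(2u)/\log y$, and in the prescribed range $y>\exp((\log_2 3x)^{5/3+\varepsilon})$ one checks $\beta\ll\kappa_y$. Performing the $e$-sum \emph{before} comparing to $\gq(d)$ converts the shifted main contribution into the Euler product
\[\prod_{\substack{p\mid d\\ p\leq y}}\Bigl(1-\frac{1}{2p^{1-\beta}}\Bigr),\]
which is a multiplicative perturbation of $\gq(d)$ whose deviation is controlled by logarithmic differentiation in $\beta$: the resulting bound is $\beta\vartheta_y(d)\varphi_y(d)\ll\kappa_y\varphi_y(d)(1+\vartheta_y(d))$, precisely matching the remainder in \eqref{eq:Td}. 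Keeping the $d$-dependence encapsulated in the multiplicative weights $\varphi_y$ and $\vartheta_y$ rather than accumulating per-divisor errors is the principal technical manoeuvre.
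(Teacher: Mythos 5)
Your overall route is the one the paper takes: Möbius inversion $T_d(x,y)=\sum_{t\mid d}\mu(t)2^{-\Omega(t)}T_1(x/t,y)$, the Tenenbaum--Wu asymptotic for $T_1$, comparison of the shifted values $\varrho_{1/2}(u-u_t)$ with $\varrho_{1/2}(u)$ via the exponent $\xi(2u)/\log y\ll\kappa_y$, and reassembly of the $t$-sums into Euler products yielding $\gq(d)$ with deviation $\ll\kappa_y\vartheta_y(d)\varphi_y(d)$. That part of your plan is sound and matches the paper's computation of the main term and of its first remainder.

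There is, however, a genuine gap: you never treat the divisors $t$ of $d$ that are close to $x$. For such $t$ two things break simultaneously. First, the relative error in the asymptotic for $T_1(x/t,y)$ contains a term $1/\log(2x/t)$, which is not $O(\kappa_y)$ when $x/t$ is bounded, so you cannot ``insert the previous asymptotic'' uniformly over all $t\mid d$. Second, your claimed identity $\varrho_{1/2}(u-u_t)/\varrho_{1/2}(u)=\exp(\beta\log t)\{1+O(\cdot)\}$ fails when $u-u_t$ is small, since $\varrho_{1/2}(v)=v^{-1/2}/\Gamma(1/2)$ blows up as $v\to0^+$; even as a mere upper bound the inequality $\varrho_{1/2}(u-v)\ll\varrho_{1/2}(u)\e^{v\xi(2u)}$ requires $u-v\geqslant\tfrac12$ (the paper footnotes exactly this point about Smida's lemma). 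The paper therefore truncates at $t\leqslant x/\sqrt y$ and bounds the tail by a separate argument, namely $\sum_{t\mid d,\,t\leqslant z}1\leqslant\Psi(z,p_{\omega(d)})\leqslant z^{c/\log_2x}$ followed by partial summation, giving a contribution $\ll\sqrt x\,\e^{c\log x/\log_2x}$ which is then checked to be negligible against $x\varrho_{1/2}(u)/\sqrt{\log y}$ in the stated range of $y$. Some such divisor-counting input is needed; your proposal as written has no mechanism for it. A secondary, lesser point: even on the safe range $t\leqslant x/\sqrt y$, the ratio $\varrho_{1/2}(u-u_t)/\varrho_{1/2}(u)$ is not asymptotic to $\e^{\beta\log t}$ uniformly (the logarithmic derivative of $\varrho_{1/2}$ varies over $[u-u_t,u]$); what you actually need, and what the paper uses, is the upper bound \eqref{locrhd} for the remainder terms together with the two-sided estimate $\varrho_{1/2}(u-u_t)-\varrho_{1/2}(u)\ll u_t\varrho_{1/2}(u)t^{\kappa_y}\log(u+1)$ coming from the derivative bound $\varrho_{1/2}'(w)\ll\varrho_{1/2}(w)\log(1+w)$ for the main term.
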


\begin{proof}
We have 
$$T_d(x,y)=\sum_{m\in S(x,y)}{1\over 2^{\Omega(m)}}\sum_{t|(m,d)}\mu(t)=\sum_{t|d}{\mu(t)\over 2^{\Omega(t)}}T_1\Big({x\over t},y\Big).$$
An estimate for the inner $T_1$-term   follows from \cite[cor.\thinspace2.3]{TW03}, which, in the domain $$x\geqslant 1,\quad y>\exp\{(\log_23x)^{5/3+\varepsilon}\},$$ we rewrite as
\begin{equation}
  \label{eq:T1}
  T_1(x,y)={Cx\varrho_{1/2}(u)\over \sqrt{\log y}}\Big\{1+O\Big({\log (u+1)\over \log y}+{1\over \sqrt{\log y}}+{1\over \log(2x)}\Big)\Big\}. 
\end{equation}
Here the error term $1/\log(2x)$ enables to include the case $1\leqslant x<y$: the corresponding estimate follows from \cite[th.\thinspace II.6.2]{GT15}. 
Since $\log (u+1)\ll(\log y)^{3/5}$ in $H_\varepsilon$, we get 
\begin{equation}
  \label{evalTd1}
  T_d(x,y)={Cx\over \sqrt{\log y}}\sum_{\substack{t|d\\ t\leqslant x/\sqrt{y}}}{\mu(t)\rhd(u-u_t)\over t2^{\Omega(t)}}+R_1+R_2,
\end{equation}
with 
\begin{align*}
  R_1&\ll{x\over (\log y)^{9/10}}\sum_{\substack{t|d\\ t\leqslant x/\sqrt{y}}}{\mu(t)^2\rhd(u-u_t)\over t2^{\Omega(t)}}\ll{x\rhd(u)\over (\log y)^{9/10}}\sum_{t|d}{\mu(t)^2\over 2^{\Omega(t)}t^{1-\xi(2u)/\log y}},\\
  R_2&\ll\sum_{\substack{t|d\\ x/\sqrt{y}<t\leqslant x}}{x\over t\sqrt{\log 2x/t}},
\end{align*}
where the bound for $R_1$ follows from \begin{equation}
  \label{locrhd}
  \rhd(u-v)\ll\rhd(u)\e^{v\xi(2u)}\quad(u\geqslant 1,\,0\leqslant v\leqslant u-\tfrac12)
\end{equation}
proved in \cite{Sm93}\footnote{In \cite[lemma 6.1]{Sm93}, this bound is claimed for $0\leqslant v\leqslant u$, but it is necessary to exclude  the case when  $u-v$ is small.}. By multiplicativity, we thus get  
\begin{equation}
  \label{majR1}
  R_1\ll{x\rhd(u)\varphi_y(d)\over (\log y)^{9/10}}\cdot 
\end{equation}
\par 
Since $d\leqslant x$, we have $p_{\omega(d)} \ll \log x$, where $p_{\omega(d)}$ denotes the $\omega(d)$th prime number. Hence, using de Bruijn's estimate for $\log \Psi(x,y)$ as refined in \cite[th.\thinspace III.5.2]{GT15}, we plainly obtain, for a suitable absolute constant $c>0$,
\begin{equation}
  \label{majsomd}
  \sum_{t|d,\,t\leqslant z}1\leqslant \Psi(z,p_{\omega(d)})\leqslant z^{c/\log_2x}\qquad(\sqrt{x}\leqslant z\leqslant x\big).
\end{equation}
As a consequence 
\begin{displaymath}
  R_2\ll x\int_{\sqrt{x}}^x{1\over z}\d O\big(z^{c/\log_2x}\big)\ll \sqrt{x} \e^{c\log x/\log_2x},
\end{displaymath}
and we  conclude that
\begin{equation}
  \label{eq:R1+R2}
  R_1+R_2 \ll {x\rhd(u)\varphi_y(d)\over (\log y)^{9/10}}\cdot
\end{equation}
\par 
To estimate the main term of \eqref{evalTd1}, we approximate $\rhd(u-u_t)$ by  $\rhd(u)$, using the bound 
$$\rhd'(w)\ll\rhd(w)\log(1+w)\qquad (w\geqslant \tfrac12)$$
which, with an appropriate modification of the range of validity, is also proved in \cite[lemma~6.2]{Sm93}. In view of \eqref{locrhd}, this implies that 
$$\rhd(u-u_t)-\rhd(u)\ll u_t\rhd(u) t^{\kappa_y} \log(u+1).$$ 
Thus,  
\begin{align*}
  \sum_{\substack{t|d\\ t\leqslant x/\sqrt{y}}} {\mu(t)\rhd(u-u_t)\over t2^{\Omega(t)}\rhd(u)}&=\sum_{\substack{t|d\\ t\leqslant x/\sqrt{y}}}{\mu(t)\over t2^{\Omega(t)}}+O\Bigg( \sum_{\substack{t|d\\ t\leqslant x/\sqrt{y}}}{\mu(t)^2(\log t)\log(u+1)\over t^{1-\kappa_y}2^{\Omega(t)}\log y}\Bigg) \\
  &=\gq(d)+  O\Bigg(\sum_{\substack{t\mid d \\ x/\sqrt{y}< t\leqslant x}} \frac 1t+  \kappa_y\sum_{t|d}{\mu(t)^2\log t\over t^{1-\kappa_y}2^{\Omega(t)}}\Bigg).
\end{align*}
By \eqref{majsomd}, the first error term is $\ll \sqrt{y}x^{-1+c/\log_2x}$, which is compatible with \eqref{eq:Td}. To estimate the second, we  write $\log t= \sum_{p\mid t} \log p$ since $\mu^2(t)=1$ and invert summations. This yields the required estimate
\eqref{eq:Td}. 
\end{proof}

\medskip

By \eqref{eq:mingS1}, we have 
\begin{equation*}
\gS(x,y)\gg \sum_{d\in S(\sqrt{x}/\e,y)}\frac{\varrho(u-2u_d)\{T_d(\e d,y)-T_d(d,y)\}}{\varrho(u)d^22^{\Omega(d)}}\cdot
\end{equation*}
We insert \eqref{eq:Td} to evaluate the difference between curly brackets and sum separately the resulting main term and the remainder terms. 
This can be done  by partial summation, using a variant of~\eqref{eq:T1} in which the inclusion of the factors $\gq(d)$ or $\varphi_y(d)\{1+\vartheta_y(d)\}$ has as sole effects to alter the value of the constant $C$.
This yields
\begin{equation}
  \label{eq:mino-gS-integrale}
  \begin{aligned}
  \gS(x,y)&\gg \sum_{d\in S(\sqrt{x}/\e,y)}{\gq(d)\varrho(u-2u_d)\rhd(u_d)\over \varrho(u)2^{\Omega(d)}d\sqrt{\log y}}\\
&\gg\frac1{\varrho(u)}\int_{1/\log y}^{u/2}{\varrho(u-2v)\rhd(v)^2}\d v=\frac1{2\varrho(u)}\int_{2/\log y}^u\varrho(u-v)\rhd(\dm v)^2\d v.
 \end{aligned}
\end{equation}
The contribution of the interval $[2/\log y,2]$ to the last integral  is 
\begin{equation}
\label{eq:int-voisnage-0}
  \geqslant 2\rho(u)\int_{1/\log y}^1 \rho_{1/2}(v)^2 \d v = \frac{2\rho(u)}{\pi} \int_{1/\log y}^1 \frac{\d v}{v}
 =  \frac{2\rho(u)}{\pi} \log_2y. 
\end{equation}
Now observe that \eqref{esti-int-rho_kappa} implies
$$\rhd(\dm v)^2\asymp\frac{\varrho(v)}{\sqrt{v}}\qquad (v\geqslant 1).$$
Since $\rho_2$ is the convolution square of $\varrho$, it follows that
\begin{equation}
\label{eq:int-final}
\frac1{\varrho(u)}\int_{2}^u\varrho(u-v)\rhd(\dm v)^2\d v\gg\frac{\varrho_2(u)}{\sqrt{u}\varrho(u)}=\gr(u).
\end{equation}

Carrying back into \eqref{eq:mino-gS-integrale} and taking \eqref{eq:int-voisnage-0} into account, we obtain the required estimate.
\medskip
\section{Proof of Theorem \ref{3est}{\rm(i)}: upper bound}
We adapt to the friable case the iterative method developed by Tenenbaum in \cite{T85} (see also \cite[\S 7.4]{HT88}) for bounding the mean-value of the $\Delta$-function.
Throughout this proof the letters $c$ and $C$, with or without index,  stand for absolute positive constants.\par 
Given an integer $n\geqslant 2$, let us denote by $\{p_j(n)\}_{1\leqslant j\leqslant \omega(n)}$ the increasing sequence of its distinct prime factors. 
Following \cite{T85} (see also \cite{HT88}), define $$M_q(n)=\int_\R \Delta(n,u)^q\d u,$$
and, for squarefree $n$, put  $$n_k:=\begin{cases}\displaystyle\prod_{j\leqslant k}p_j(n) & \text{ if } k\leqslant \omega(n),\\
n& \text{ otherwise.}
\end{cases}$$ 
 Now,  let $$L_{k,q}=L_{k,q}(x,y):=\sum_{P^+(n)\leqslant y}{\mu(n)^2M_q(n_k)^{1/q}\over n^\beta},$$
where $\beta:=\alpha\big(\sqrt{x},y\big)$ is the saddle-point related to the friable mean-value of $\tau(n)$, the divisor function.
\par 
We aim at bounding $L_{k,q}$ from above for large $k$ and $q$. The starting point is the identity $$\Delta(mp,u)=\Delta(m,u)+\Delta(m,u-\log p)\quad(u\in\R,\,p\nmid m).$$ 
Apply this to $m=n_k$, $p=p_{k+1}(n)$. Raising to the power $q$ and expanding out, we obtain  $$M_q(n_{k+1})=2M_q(n_k)+E_q(n_k,p_{k+1})\qquad (\omega(n)>k),$$
with
  \begin{displaymath}
    E_q(m,p) := \sum_{1\leqslant j<q} \binom{q}{j} \int_{\R} \Delta(m;v)^j
 \Delta(m;v-\log p)^{q-j} \d v. 
  \end{displaymath}

It follows that 
 $$L_{k+1,q}\leqslant 2^{1/q}L_{k,q}+\sum_{\substack{P^+(m)\leqslant y\\ \omega(m)=k}}\mu(m)^2\sum_{P^+(m)<p\leqslant y}E_q(m,p)^{1/q}\sum_{\substack{P^+(n)\leqslant y\\ \omega(n)\geqslant k+1\\ n_{k+1}=mp}}{\mu(n)^2\over n^\beta}\cdot$$
The latter sum is $$\ll {\zeta_1(\beta,y)\over p^\beta m^\beta}\prod_{\ell\leqslant p}{1\over 1+\ell^{-\beta}}=:{\zeta_1(\beta,y)g_\beta(p)\over p^\beta m^\beta},$$
where, here and in the remainder of this proof, $\ell$ denotes a prime number, and $$\zeta_1(\sigma,y):=\prod_{\ell\leqslant y}(1+\ell^{-\sigma}).$$
 Hölder's inequality yields
 $$\sum_{z<p\leqslant y}{E_q(m,p)^{1/q}\over p^\beta}\leqslant \bigg\{\sum_{p\geqslant 2}{E_q(m,p)\log p\over p}\bigg\}^{1/q}\bigg\{\sum_{z<p\leqslant y}{1\over p^{(q\beta-1)/(q-1)}(\log p)^{1/(q-1)}}\bigg\}^{(q-1)/q}.$$
and the prime number theorem enables to bound the last sum over $p$ by
 $$\ll {qy^{q(1-\beta)/(q-1)}\over (\log z)^{1/(q-1)}}.$$
Now, we have (see, e.g.,  \cite[th. 73]{HT88}) 
$$\sum_{p}{E_q(m,p)\log p\over p}\leqslant C4^q\tau(m)^{q/(q-1)}M_q(m)^{(q-2)/(q-1)}.$$
It follows that 
 \begin{equation}
   \label{majrec}
   L_{k+1,q}
\leqslant 2^{1/q}L_{k,q}+C_1q\e^{\xi(u/2)}G_k\leqslant 2^{1/q}L_{k,q}+C_2qu^2G_k,
 \end{equation}
with 
$$G_k:=\zeta_1(\beta,y)\sum_{\substack{P^+(m)\leqslant y\\ \omega(m)=k}}{\mu(m)^2\tau(m)^{1/(q-1)}M_q(m)^{(q-2)/q(q-1)}g_\beta(P^+(m))\over m^\beta(\log P^+(m))^{1/q}}\cdot$$
Since 
$${\mu(m)^2\zeta_1(\beta,y)g_\beta(P^+(m))\over m^\beta}= \sum_{\substack{P^+(n)\leqslant y\\ n_k=m}}{\mu(n)^2\over n^\beta},$$
we infer that 
$$G_k\leqslant \sum_{\substack{P^+(n)\leqslant y \\ \omega(n)\geqslant k}}{\mu(n)^2\tau(n_k)^{1/(q-1)}M_q(n_k)^{(q-2)/q(q-1)}\over n^\beta(\log p_k(n))^{1/q}}\cdot$$
A new application of Hölder's inequality yields
$$G_k\leqslant L_{k,q}^{(q-2)/(q-1)}S_k^{1/(q-1)},$$
 where 
 \begin{align*}
   S_k&:=\sum_{\substack{P^+(n)\leqslant y\\ \omega(n)\geqslant k}}{\mu(n)^2\tau(n_k)\over n^\beta\{\log p_k(n)\}^{(q-1)/q}}\\
&\leqslant 2\sum_{\substack{P^+(m)\leqslant y\\ \omega(m)=k-1}}{\mu(m)^2\tau(m)\over m^\beta}\sum_{P^+(m)<p\leqslant y}{1\over p^\beta(\log p)^{1-1/q}}\prod_{p<\ell\leqslant y}\Big(1+{1\over \ell^\beta}\Big)\\
&\leqslant {\zeta_{1}(\beta,y)\over (k-1)!}\sum_{p\leqslant y}{{g_\beta(p)\over p^\beta(\log p)^{1-1/q}}}\Big(\sum_{\ell\leqslant p}{2\over \ell^\beta}\Big)^{k-1}\ll{\zeta_{1}(\beta,y)y^{1-\beta}\over (k-1)!}\sum_{p\leqslant y}{{\e^{-T(p)}\{2T(p)\}^{k-1}\over p(\log p)^{1-1/q}}},
 \end{align*}
where we set $$
T(p):=\sum_{\ell\leqslant p}{1\over \ell^\beta}\cdot$$
(Recall that the letter $\ell$ denotes generically a prime number.) \par 
We evaluate $T(p)$ by \cite[lemma 3.6]{BT05a}. Writing
$$\L(z):=\e^{(\log z)^{3/5}/(\log_2z)^{1/5}},\quad w(t):={t^{1-\beta}-1\over (1-\beta)\log t},$$ we have 
$$T(p)=\log_2p+\int_1^{w(p)}t\xi'(t)\d t+b+O\Big(\frac{w(p)}{\L(p)^c} + \frac{\log (u+1)}{\log y}\Big).$$
where $b$ is a suitable constant.
Note that $w(y)=u/2+O(u/\log y)$. Defining $$h(v):= \int_1^{w(\exp\e^v)}t\xi'(t)\d t+b_1,$$ with $b_1$ sufficiently large so that  $T(p) \leqslant \log_2 p +h(\log_2 p)$, and writing $z_v:=v+h(v)$,
we have, by the prime number theorem,
$$W_k(y):=\sum_{p\leqslant y}{{\e^{-T(p)}\{T(p)\}^{k-1}\over p(\log p)^{1-1/q}}}\ll \int_0^{\log_2y}\e^{-(2-1/q)z_v+(1-1/q)h(\log_2y)}z_v^{k-1}\d v.$$
Since $h(\log_2y)\leqslant u/2+O(u/\log 2u)$ and since $h'(v)\geqslant 0$, the change of variables $z=z_v$ yields
$$W_k(y)\ll \e^{u/2+O(u/\log 2u)}\int_0^\infty\e^{-(2-1/q)z}z^{k-1}\d z\ll \frac{\e^{u/2+O(u/\log 2u)}(k-1)!}{(2-1/q)^{k-1}}\cdot$$
\par 
Thus, 
$$S_k\ll{\zeta_{1}(\beta,y)\e^{u/2+O(u/\log 2u)}\over (1-1/2q)^k}\ll{\zeta_1(\alpha,y)\e^{O(u/\log 2u)}\over (1-1/2q)^k},$$ 
since $\zeta(\beta,y)=\zeta(\alpha,y)\e^{-u/2+O(u/\log 2u)}$ --- see \cite[(4.2)]{Te22}.\par 
 Finally, for $q$ sufficiently large and $\dm<\lambda<\log 2$, we obtain 
\begin{equation}
  \label{recLkq}
  G_k\leqslant C_{ 3} L_{k,q}^{(q-2)/(q-1)}\zeta_{1}(\alpha,y)^{1/(q-1)}\e^{c_0u/(q\log 2u)+\lambda k/q(q-1)}.
\end{equation}
At this stage, we introduce  $$L^*_{k,q}=L_{k,q}+2^{k/q}u^{2q}\e^{c_0u/\log 2u}\zeta_1(\alpha,y),$$ so that  \eqref{majrec} still holds for  $L^*_{k,q}$ in place of $L_{k,q}$. Setting $q(k):=\big\lfloor c_1\sqrt{k/\log k}\big\rfloor$ with sufficiently small, absolute $c_1$, we thus have, for large $k$,  
$$L^*_{k+1,q}\leqslant \Big\{2^{1/q}+{1\over k}\Big\}L_{k,q}^*\qquad \big(q\leqslant q(k)\big),$$
whence 
\begin{equation}
  \label{reck}
  L^*_{k+1,q}\leqslant 3^{1/q} L^*_{k,q}\qquad \big(q\leqslant q(k)\big).
\end{equation}
\par 
To carry out a double induction on $k$ and $q$, we also need a bound on  $L_{k,q+1}^*$ in terms of $L_{k,q}^*.$ This is achieved by the inequality $M_{q+1}(n)^{1/(q+1)}\leqslant 2M_q(n)^{1/q}$ proved in \cite[th. 72]{HT88}, which yields 
\begin{equation}
  \label{recq}
  L_{k,q+1}^*\leqslant 2u^2L_{k,q}^*.
\end{equation}
 With the aim of bounding  $L_{k,q(k)}^*$ in terms of $L_{2,q(2)}^*$, we use  \eqref{reck} to reduce the parameter $k$ and \eqref{recq} to secure the condition $q\leqslant q(k)$. The first handling provides an overall factor 
$$\leqslant \prod_{1\leqslant q\leqslant q(k)}q^{c_2}\leqslant \e^{c_3\sqrt{k\log k}}$$
whereas the second induces a global factor 
$\ll u^{c_4q(k)}.$
\par 
Finally,  we obtain
$$L_{k,q}^*\ll L_{2,q(2)}^*u^{c_5q(k)}\e^{c_5\sqrt{k\log k}}.$$
Let $K:=\log_2y+u.$ It can be shown (see \cite{BT05b} and use a bound similar to \cite[(7.44)]{HT88}) that the contribution to  $L_{k,q}$  of those integers  $n$ such that  $\omega(n)>CK$ is negligible, and we omit the details. Eventually, we arrive at 
$$L_{k,q}\ll\e^{c_5\sqrt{K\log K}}u^{c_6\sqrt{K/\log K}}\zeta(\alpha,y)\e^{c_0u/\log 2u}\ll\zeta(\alpha,y)\e^{c_7\sqrt{\log_2y\log_3y}+O(u/\log 2u)},$$
and so
$$\sum_{n\in S(x,y)}{\mu(n)^2\Delta(n)\over n^\beta}\ll \zeta(\alpha,y)\e^{c\sqrt{(\log_2y)\log_3y}+O(u/\log 2u)}.$$
Employing the representation $n=mr^2$, $\mu(m)^2=1$, we obtain that the same bound holds for 
$$\sum_{n\in S(x,y)}{\Delta(n)\over n^\beta}\cdot$$
\par 
This is the key to our upper bound for $D(x,y):=\sum_{n\in S(x,y)}\Delta(n)$. We have 
\begin{align*}
  D(x,y)\log x-\int_1^x{D(t,y)\over t}\d t&=\sum_{n\in S(x,y)}\Delta(n)\log n\leqslant \sum_{\substack{m\pnu\leqslant x\\ P^+(mp)\leqslant y}}\Delta(m)(\nu+1)\log \pnu\\
        &\ll yD\Big({x\over y},y\Big)+\sum_{\substack{x/y<n\leqslant x\\P^+(n)\leqslant y}}{x\Delta(n)\over n}+\sum_{\substack{n\leqslant x \\ P^+(n)\leqslant y}}\Delta(n)\sqrt{x\over n}\cdot
\end{align*}
The trivial bound
$$D(x,y) \leqslant \sum_{n\in S(x,y)} \tau(n) \ll x\varrho_2(u)\log y,$$
that holds in $H_\varepsilon$ (see \cite[Cor. 2.3]{TW03}), 
furnishes
$$\int_1^x{D(t,y)\over t}\d t\ll x\varrho_2(u)\log y,\quad yD(x/y,y)\ll x\varrho_2(u-1)\log y.$$
Moreover, in the same region, for $y$ sufficiently large, $\beta>1/2$ 
$$\sum_{\substack{n\leqslant x\\ P^+(n)\leqslant y}}\Delta(n)\sqrt{x\over n}+\sum_{\substack{x/y<n\leqslant x\\ P^+(n)\leqslant y}}{x\Delta(n)\over n}\ll x^{\beta}\e^{\xi(u/2)}\sum_{n\in S(x,y)}{\Delta(n)\over n^\beta}\cdot$$
Collecting these estimates, we obtain   
\begin{align*}
  D(x,y)&\ll x{\varrho_2(u)\over u}+x\varrho_2(u)\log 2u+{x^\beta \zeta(\alpha,y)\e^{c\sqrt{(\log_2y)\log_3y}+O(u/\log 2u)}\over \log x}
\\ 
& \ll \Psi(x,y) 2^{u+O(u/\log 2u)}\e^{c\sqrt{\log_2 y\log_3y}},
\end{align*}  
where we used \eqref{eq:rgot}, \eqref{esti:Psi-Hildebrand},   the estimate  
\begin{displaymath}
  \frac{x^\beta \zeta(\alpha,y)}{ \log x} \asymp   \Psi(x,y) 2^{u+O(u/\log u)},
\end{displaymath}
which follows from \eqref{esti:Psi-Hildebrand-Tenenbaum}, \eqref{eq:eval-phi'_y} and 
\begin{displaymath}
  (\beta-\alpha) \log x  = - u \int_{u/2}^u \xi'(t)dt  +O(1) = u\log 2 +O\Big(\frac{u}{\log u}\Big). 
\end{displaymath}

 This concludes the proof of the upper bound included in \eqref{encH-eps}.
\medskip

\section{Proof of Theorem \ref{3est}{\rm(ii)}}\label{sec:proof-thm-ii}
We retain notation $g(t)$ from \eqref{def-g}, $\varepsilon_y$ from \eqref{def-epsy}, define
$ \eta_y:= (\log_2 y)/\log y.$ 
Since  $\max(1,\lfloor \tau(n)/\log n\rfloor) \leqslant \Delta(n) \leqslant \tau(n)$ holds for all $n\geqslant 1$ (see e.g. \cite[th. 60, (6.7)]{HT88}), we have 
\begin{equation}
\label{encgS}
\frac{\Psi(x,y;\tau)}{2\Psi(x,y)\log x }\leqslant \gS(x,y) \leqslant \frac{\Psi(x,y;\tau)}{\Psi(x,y)}\qquad (x\geqslant y\geqslant 2)\cdot
\end{equation}
Now, by \cite[th.\thinspace1.2]{Te22} and \cite[(1.6)]{Te22},  we have, with $\lambda:=y/\log x$, 
 \begin{equation}
   \label{evalgS}
\frac{\Psi(x,y;\tau)}{\Psi(x,y)}
    \asymp \zeta(\alpha,y)\e^{-uh(\lambda)\{1+O(\varepsilon_y)\}} \quad(x\geqslant y\geqslant 2),
 \end{equation}
 where we have put
 \begin{align*}
  h(t)&:=t\log 4-(1+2t)\log \Big(\frac{1+2t}{ 1+t}\Big)=t\log \Big(1+\frac1t\Big)-g(t)\quad (t\geqslant 0),
\end{align*}
and, for the purpose of further reference, note that 
\begin{equation}
  \label{evaltheta}
 uh(\lambda)\sim(1-\log 2)u\quad(u\to\infty,\,\lambda\to\infty),\quad uh(\lambda)\asymp \overline{u} \qquad (x\geqslant y\geqslant 2).
\end{equation}

We shall show that 
\begin{equation}
  \label{evalZay}
  \zeta(\alpha,y)=\e^{\lambda u\log (1+1/\lambda)\{1+O(\varepsilon_y+1/\log 2u)\}}\qquad\Big(2\leqslant y\leqslant x^{1/(2\log_2x \log_3x)}\Big).
\end{equation}
\par 

 Since $g(\lambda)u\gg \overline u$ for $x\geqslant y\geqslant 2$, we see that  \eqref{eval-gS-hors-Heps} follows from \eqref{encgS} and \eqref{evalZay} in any subregion where $\overline u (\varepsilon_y+1/\log 2u)\gg\log_2x$: the condition above corresponds to this requirement when $y$ is large. However, for bounded $y$, we have $\Psi(x,y;\tau)/\Psi(x,y)\asymp(\log x)^{\pi(y)}$, and so \eqref{eval-gS-hors-Heps} holds trivially. Therefore, we may assume in the sequel that $y$ is sufficiently large. 

\par  

Let us now embark on the proof of \eqref{evalZay}.  
\par
Observe that 
\begin{equation}
\label{zay}
\zeta(\alpha,y)=\zeta(1,y)\exp\bigg\{\int_{\alpha}^{1} \varphi_y (\sigma) \d \sigma  \bigg\}.
\end{equation}
Using the estimate for $\varphi_y(\sigma)$ given in \cite[lemma 13]{HT86}, we may write 
\begin{equation}
  \label{Z1}
  \int_{\alpha}^1\varphi_y(\sigma)\d \sigma=\Big\{1+O\Big({1\over \log y}\Big)\Big\}\int_\alpha^1{y^{1-\sigma}-1\over (1-\sigma)(1-y^{-\sigma})}\d\sigma.
\end{equation}
\par 
By inspection of the proof  of \eqref{alpha:esti-general} in \cite[pp. 285-7]{HT86}, we see that, for a suitable constant $C$, we have
\begin{equation}
  \label{alpha+}
\alpha(x,y)=1-{\xi(u)\over \log y}+O\Big({1\over (\log y)^2}\Big)\qquad \Big(C(\log x)(\log_2x)^3< y\leqslant x\Big).  
\end{equation}
This implies $y^\alpha\gg y\e^{-\xi(u)}\gg\log y$ in the same domain, so the contribution of the term $1-y^{-\sigma}$ in \eqref{Z1} is absorbed by the error term. The change of variables defined by  \mbox{$(1-\sigma)\log y=\xi(t)$} then provides, in view of \eqref{alpha+} and  \eqref{eval-xi}, 
\begin{equation*}
\begin{aligned}
\int_{\alpha}^1\varphi_y(\sigma)\d \sigma&=\Big\{1+O\Big({1\over \log y}\Big)\Big\}\int_1^ut\xi'(t)\d t\\
&=u+{u\over \log u}+O\Big({u\over \log y}\Big)=u+O\Big(\frac u{\log 2u}+\varepsilon_y u\Big).
\end{aligned}
\end{equation*}
Since, in the domain of \eqref{alpha+},
$$u\lambda\log \Big(1+\frac1\lambda\Big)=u+O\Big({u\over \log 2u}\Big),
$$
we obtain \eqref{evalZay} in the range $C(\log x)(\log_2x)^3< y\leqslant x^{1/(2\log_2x\log_3x)}$. Indeed the factor $\zeta(1,y)\asymp \log y$ appearing in \eqref{zay} is absorbed by the error term.
\par 
When $2\leqslant y\leqslant C(\log x)(\log_2x)^3$,  we put $t=y^\sigma$ in \eqref{Z1} to get
$$\int_{\alpha}^1\varphi_y(\sigma)\d \sigma=\Big\{1+O\Big({1\over \log y}\Big)\Big\}\int_{y^\alpha}^y{y/t-1\over (t-1)\log (y/t)}\d t.$$
Note that \eqref{alpha:esti-general} now implies $\alpha\log y\ll\log_22y $. Put $T:=(\log y)^K$, where $K$ is so large so that $T>y^{\alpha}$. The contribution of the interval $[T,y]$ to the above integral is 
$$\ll \int_T^\infty\frac{y}{t^2}\d t\ll  \frac{y}{(\log y)^K},$$
where we used the bound $\e^v-1\ll v \e^v$ $(v\geqslant 0)$. Then, 
\begin{align*}
  \int_{y^\alpha}^T{y/t-1\over (t-1)\log (y/t)}\d t
  &=\Big\{1+O\Big({\log_2y\over \log y}\Big)\Big\}{y\over \log y}\int_{y^\alpha}^T{1\over t(t-1)}\d t\\
  &=\big\{1+O\big(\eta_y\big)\big\}{y\over \log y}\log \Big({1-1/T\over 1-1/y^\alpha}\Big).
\end{align*}
>From \cite[(7.18)]{HT86}, it follows that
$$			\log\Big(\frac{1}{1-y^{-\alpha}}\Big)=\log\Big(1+\frac1\lambda\Big)\Big\{1+O\Big(\frac{\log_2y}{\log y}\Big)\Big\}\quad\Big(2\leqslant y\leqslant C(\log x)(\log_2x)^3\Big).
$$
Therefore
\begin{align*}
  \int_{\alpha}^1\varphi_y(\sigma)\d \sigma &= 
\big\{1+O\big(\eta_y\big)\big\}{y\over \log y}\log \Big(1+\frac1\lambda\Big)+O\Big(\frac y{(\log y)^K}\Big)\\
 &=\big\{1+O\big(\eta_y\big)\big\}u\lambda\log \Big(1+\frac1\lambda\Big)\cdot
\end{align*}
This establishes \eqref{evalZay} in the complementary range $2\leqslant y\leqslant C(\log x)(\log_2x)^3$.
\par  This completes the proof of theorem~\ref{3est}(ii).\par 

\section*{Acknowledgement}

This work is supported by the Austrian-French project “Arithmetic Randomness” between FWF and ANR (grant numbers I4945-N and ANR-20-CE91-0006).\par

\vskip2cm

\bibliographystyle{plain}
\bibliography{biblio-hooley}

\end{document}